\newtheorem{Theorem}{Theorem}[section]
\newtheorem{Proposition}[Theorem]{Proposition}
\newtheorem{Corollary}[Theorem]{Corollary}
\theoremstyle{definition}
\newtheorem{Definition}[Theorem]{Definition}
\newtheorem{Remark}[Theorem]{Remark}
\newtheorem{Example}[Theorem]{Example}
\newtheorem{Examples}[Theorem]{Examples}
\theoremstyle{definition}
\theoremstyle{remark}
\newcommand{\bib}{\bibitem}
\begin{document}

\renewcommand{\proofname}{Proof}

\title{Banach spaces with the Blum-Hanson Property}

\author{F. Netillard}
\address{Universit\'{e} Bourgogne Franche-Comt\'{e}, Laboratoire de Math\'{e}matiques de Besan\c{c}on, UMR 6623,
16 route de Gray, 25030 Besan\c{c}on Cedex, FRANCE.}
\email{francois.netillard2@univ-fcomte.fr}




\maketitle





\begin{abstract}
We are interested in a sufficient condition given in \cite{L} to obtain the Blum-Hanson property and we then partially answer two questions asked in this same article on other possible conditions to have this property for a separable Banach space.
\end{abstract}

\markboth{}{}




\section{Introduction}

These notes are essentially inspired by article \cite{L} in which sufficient new conditions to justify that a Banach space has the Blum-Hanson property were obtained.\\
We recall that, for a (real or complex) Banach space $X$, and a contraction T on X ($T$ is a bounded operator on $X$ with $\|T\|\leq 1$), we say that $T$ has the \textit{Blum-Hanson property} if, for $x,y \in X$ such that $T^nx$ weakly converges to $y \in X$ when $n$ tends to infinity, the mean
\begin{center}$\dfrac{1}{N}\displaystyle\sum\limits_{k=1}^NT^{n_k}x$\end{center}
tends toward $y$ in norm for any increasing sequence of integers $(n_k)_{k \geq 1}$.\\
The space $X$ is said to have the \textit{Blum-Hanson property} if every contraction on $X$ has the Blum-Hanson property.\\
Note, to understand the interest in this property and its historical aspect, that, when $X$ is a Hilbert space and the linear operator $T$ is a contraction, for all $x \in X$ such that $T^nx \overset{w}{\rightarrow} 0$, the arithmetic mean \begin{center}$\dfrac{1}{N}\displaystyle\sum\limits_{k=1}^NT^{n_k}x$\end{center}
is norm convergent to $0$ for any increasing sequence of integers $(n_k)_{k \geq 1}$. This result was first proved by J.R. Blum and D.L. Hanson in \cite{B} for isometries induced by measure-conserving transformations, then in \cite{A} and \cite{J} for arbitrary contractions.\\
The most notable spaces having the Blum-Hanson property are the Hilbert spaces and the $\ell_p$ spaces for $1\leq p<\infty$.\\
Note that this property is not preserved under renormings (see \cite{MT}). This raises the following question: "Which Banach spaces can be renormed to have the Blum-Hanson property~?", already asked in \cite{L}. This question motivated the writing of this article.\\
To understand the main results of this work, we give first the following definition of an asymptotically uniformly smooth norm.

\begin{Definition}
Consider a Banach space $(X,\|\cdot\|)$. By following the definitions due to V. Milman \cite{M} and the notations of \cite{Jo2} and \cite{L3}, for $t \in [0,\infty)$, $x \in S_X$ and $Y$ a closed vector subspace of $X$, we define the modulus of asymptotic uniform smoothness, $\overline{\rho}_X(t)$:
\begin{center}$\overline{\rho}_X(t,x,Y)=\underset{y \in S_Y}{\textnormal{sup}}(\|x+ty\|-1).$\end{center}
Then
\begin{center}$\overline{\rho}_X(t,x)=\underset{Y \in \textnormal{cof}(X)}{\textnormal{inf}}\overline{\rho}_X(t,x,Y) \hspace{1cm}\textnormal{ and }\hspace{1cm} \overline{\rho}_X(t)=\underset{x \in S_X}{\textnormal{sup}}\overline{\rho}_X(t,x)$.\end{center}

\noindent The norm $\|\cdot\|$ is said to be \textit{asymptotically uniformly smooth} (in short AUS) if \begin{center}$\underset{t \to 0}{\textnormal{lim}}\frac{\overline{\rho}_X(t)}{t}=0.$\end{center}
\end{Definition}

Now, we can give the main property of this paper which partially answers the previous question:
\begin{Theorem}\label{C2}
Let $Y$ be a separable Banach space whose norm is AUS.\\
Then $Y$ has an equivalent norm with the Blum-Hanson property.
\end{Theorem}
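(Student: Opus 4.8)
The plan is to prove the theorem in two moves: first replace the given AUS norm by a much more regular equivalent norm, and then feed this regular norm into the sufficient condition of \cite{L}. For the first move, since $Y$ is separable and admits an AUS norm, its Szlenk index satisfies $\mathrm{Sz}(Y)\le\omega$; by the renorming theorem of Knaust--Odell--Schlumprecht together with the power-type quantification of Godefroy--Kalton--Lancien, I would produce an equivalent norm $\tnorm{\cdot}$ on $Y$ whose modulus of asymptotic uniform smoothness is of power type, i.e. $\overline{\rho}(t)\le C t^{p}$ for some $p\in(1,\infty)$ and all small $t$. The point of this step is that the bare hypothesis ``AUS'' only gives $\overline{\rho}(t)=o(t)$, which is too weak, whereas a genuine power type $p>1$ makes the modulus summable in the precise quantitative sense required by the criterion of \cite{L}.

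I would then recall the shape of the argument that makes such a modulus sufficient. Fix a contraction $T$ on $(Y,\tnorm{\cdot})$ and $x,y$ with $T^{n}x\to y$ weakly. Since $T^{n+1}x\to y$ weakly as well and every bounded operator is weak-weak continuous, $y$ is a fixed point, so $z_{n}:=T^{n}x-y=T^{n}(x-y)$ is weakly null with $\tnorm{z_{n}}$ nonincreasing to some $\ell\ge0$. If $\ell=0$ then $\tnorm{z_{n_k}}\to0$ and the C\'esaro means converge to $0$ trivially. The whole content is therefore the case $\ell>0$, where after normalising one must show $\tnorm{\sum_{k=1}^{N}z_{n_k}}=o(N)$ for every increasing sequence $(n_k)$.

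For the estimate itself I would invoke the sufficient condition of \cite{L}, whose hypothesis is met by any power-type modulus. The power-type bound controls, through the defining inequality $\tnorm{u+v}\le\tnorm{u}\bigl(1+\overline{\rho}(\tnorm{v}/\tnorm{u})+\eps\bigr)$ valid when $v$ lies in a suitable finite-codimensional subspace, the increment of $\tnorm{\cdot}^{p}$ produced by adjoining a deep-tail orbit element to a partial sum. Combining this with the monotonicity $\tnorm{z_{n}}\downarrow\ell$ and the orbit relation $z_{n_k}=T^{\,n_k-n_j}z_{n_j}$ should yield an upper estimate of the order $\tnorm{\sum_{k=1}^{N}z_{n_k}}\le C'N^{1/p}$, which is $o(N)$ since $p>1$, giving exactly the required conclusion.

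The main obstacle I anticipate is this last step, and specifically the interface between the asymptotic nature of the modulus $\overline{\rho}$ and the fact that the sequence $(n_k)$ is prescribed and cannot be thinned. The inequality $\overline{\rho}(t)\le Ct^{p}$ only improves the norm when the adjoined vector sits in an appropriate cofinite subspace, and for a fixed subsequence one must manufacture such vectors from weak nullity together with the orbit structure, rather than by passing to a sub-subsequence; indeed, weak nullity and monotone norms alone do \emph{not} suffice, so the contraction hypothesis must genuinely be used here. Reconciling these points -- checking that the separable AUS hypothesis, once upgraded to power type, truly satisfies the quantitative demand of the criterion of \cite{L} -- is where the real work lies, the remaining ingredients being either standard renorming theory or elementary averaging.
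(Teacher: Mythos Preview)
Your overall strategy---reduce to $\mathrm{Sz}(Y)\le\omega$, renorm, then invoke \cite{L}---matches the paper, but you and the paper part ways at the renorming step, and this is exactly where the difficulty you yourself flag lives. You propose to renorm only to a power-type AUS modulus $\overline{\rho}(t)\le Ct^{p}$ and then feed this into the general criterion of \cite{L}. The paper instead renorms all the way to a norm with property sub-$(m_q)$ for some $q\in(1,\infty)$, i.e.\ $\limsup_n\|y+y_n\|^q\le\|y\|^q+\limsup_n\|y_n\|^q$ for every weakly null $(y_n)$; this is strictly more than a power-type modulus estimate (it is essentially the constant-$1$ case), and it is precisely the hypothesis of Proposition~\ref{C1}, so once it is in hand the Blum--Hanson conclusion is a one-line citation. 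The production of the sub-$(m_q)$ norm uses Corollary~5.3 of \cite{Kn} to realise $Y^*$ as a weak$^*$-closed subspace of a dual $Z=X^*$ carrying a boundedly complete FDD with \emph{exact} $1$-$(p,1)$ block estimates; the constant-$1$ upper $\ell_p$ estimate gives sup-$(m_p)^*$ on $X^*$, this passes to weak$^*$-closed subspaces, and then dualises via Proposition~\ref{duality} to sub-$(m_q)$ on the quotient $X/G\simeq Y$.

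Your second and third paragraphs are effectively an attempt to re-prove the criterion of \cite{L} from a bare power-type bound, and the ``main obstacle'' you isolate---that the subsequence $(n_k)$ is prescribed while the finite-codimensional subspace in the AUS inequality cannot be chosen adaptively---is real. Whether a norm with merely $\overline{\rho}(t)\le Ct^{p}$ for an uncontrolled $C$ already has Blum--Hanson is, from the paper's standpoint, part of the question being addressed rather than a tool you may assume; the abstract explicitly says the aim is to \emph{partially answer} open questions from \cite{L} about which asymptotic-smoothness hypotheses force Blum--Hanson. So as written your plan has a gap at the second move: either you must point to a specific theorem in \cite{L} whose hypothesis is genuinely verified by $\overline{\rho}(t)\le Ct^{p}$ (in which case the orbit calculation you sketch is unnecessary), or you must upgrade the renorming to sub-$(m_q)$ as the paper does.
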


\begin{Remark}
A Banach space $Y$ which has an AUS norm is an Asplund space. Consequently, $Y$ is separable if and only if its dual is separable.
\end{Remark}

\section{Banach space with property $(m_p)$}

N. Kalton and D. Werner introduced in \cite{K} the property $(m_p)$:
\begin{Definition}
A Banach space $X$ has property $(m_p)$, where $1\leq p\leq\infty$ if, for any $x \in X$ and every weakly null sequence $(x_n)\subset X$, it holds that:
\begin{center}$\underset{n \to \infty}{\textnormal{lim~sup}}\|x+x_n\|=(\|x\|^p+\textnormal{lim~sup}\|x_n\|^p)^{\frac{1}{p}}.$\end{center}
For $p=\infty$, the right-hand side is of course to be interpreted as max$(\|x\|,\textnormal{lim~sup}\|x_n\|)$.\\
\end{Definition}

\begin{Remark}
We shall say that $X$ has property \textit{sub}-$(m_p)$ if we can replace "$=$" by "$\leq$" in the previous definition.
\end{Remark}

\begin{Examples}
\textbullet~$\ell_p$ has property $(m_p)$, $c_0$ has property $m_{\infty}$.\\
\end{Examples}

In \cite{L}, P. Lef\`evre, \'E. Matheron and A. Primot obtained the following property which was a corollary of one of the main theorems of their paper. It is this property which allowed us in particular to obtain Theorem \ref{C2}.

\begin{Proposition}\label{C1}\cite{L}
For any $p\in (1,\infty]$, property sub-$(m_p)$ implies Blum-Hanson property.
\end{Proposition}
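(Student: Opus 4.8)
The plan is to fix a contraction $T$ on a space $X$ with property sub-$(m_p)$, $p\in(1,\infty]$, together with $x,y\in X$ satisfying $T^nx\xrightarrow{w}y$, and to establish norm convergence of $\frac1N\sum_{k=1}^N T^{n_k}x$ to $y$ for every increasing sequence $(n_k)$. First I would reduce to the case $y=0$. Since $T$ is bounded it is weak-to-weak continuous, so from $T^nx\xrightarrow{w}y$ we also get $T^{n+1}x\xrightarrow{w}Ty$, whence $Ty=y$ and therefore $T^ky=y$ for all $k$. Replacing $x$ by $x-y$ then turns the orbit into a weakly null one and transforms the mean into $\frac1N\sum_{k=1}^N T^{n_k}(x-y)$. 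Thus it suffices to prove that, writing $u_n:=T^nx$ (which is weakly null, and bounded with $\|u_n\|\le\|x\|$ because $\|T\|\le1$), the averages $\frac1N\sum_{k=1}^N u_{n_k}$ tend to $0$ in norm.

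The engine of the argument is an iterated application of sub-$(m_p)$. Suppose we build an index sequence greedily: having chosen $n_1<\dots<n_{k-1}$ and set $S_{k-1}:=u_{n_1}+\dots+u_{n_{k-1}}$, the vector $S_{k-1}$ is fixed while $(u_n)_n$ remains weakly null, so sub-$(m_p)$ yields $\limsup_n\|S_{k-1}+u_n\|^p\le\|S_{k-1}\|^p+\|x\|^p$. Hence one can choose $n_k$ large enough that
\[
\|S_k\|^p=\|S_{k-1}+u_{n_k}\|^p\le\|S_{k-1}\|^p+\|x\|^p+2^{-k}.
\]
Telescoping gives $\|S_N\|^p\le N\|x\|^p+1$, so that $\frac1N\|S_N\|\le N^{1/p-1}(\|x\|^p+N^{-1})^{1/p}\to0$, the decay being genuine precisely because $p>1$; in the case $p=\infty$ the analogous max-version of sub-$(m_p)$ gives the even stronger bound $\|S_N\|\le\|x\|+1$, and again $\frac1N\|S_N\|\to0$. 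This computation proves the averaging statement along some rapidly increasing subsequence extracted from $(u_n)$.

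The hard part, which I expect to be the main obstacle, is that the greedy extraction only controls averages over \emph{sufficiently spread-out} index sequences: the threshold determining how large $n_k$ must be depends on the partial sum $S_{k-1}$, which ranges over an unbounded set, so the estimate is not uniform and cannot be applied verbatim to an arbitrary prescribed increasing sequence $(n_k)$. (Indeed, for general weakly null sequences the conclusion can fail, as the orbit hypothesis is essential.) To remove the spreading restriction I would exploit the operator structure that a mere weakly null sequence lacks, namely the relations $T^au_n=u_{n+a}$ and $\|T^a\|\le1$: these let one compare an arbitrary segment of the orbit with translated, better-separated segments. Concretely, I would decompose a general increasing sequence into blocks, factor out the smallest power of $T$ in each block to reduce gaps to controlled windows, and average over translates (e.g.\ via an invariant-mean/Banach-limit argument) so that the contraction property transfers the spread-out estimate of the previous paragraph to every increasing $(n_k)$. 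Making this uniformization precise, and quantifying the error terms it introduces, is the technical heart of the proof.
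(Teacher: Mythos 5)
The paper does not actually prove this Proposition: it is quoted from \cite{L}, where it is obtained as a corollary of the main theorem of that article, so there is no in-paper argument to compare with. Judged on its own terms, your proposal has a genuine gap, and you have in fact located it yourself. The reduction to $y=0$ and the greedy extraction are fine: iterating sub-$(m_p)$ along a sufficiently lacunary choice of indices gives $\|S_N\|^p\le N\|x\|^p+1$, hence $\frac1N\|S_N\|\to 0$ because $p>1$. But this only yields the conclusion along \emph{some} rapidly increasing sequence of indices, whereas the Blum--Hanson property demands it for \emph{every} increasing sequence $(n_k)$, including $n_k=k$ or sequences with bounded gaps, for which the $\limsup$ in sub-$(m_p)$ says nothing at the relevant indices. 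Your proposed repair --- factor out the smallest power of $T$ in each block and average over translates via an invariant mean --- is not carried out, and as stated it does not obviously close the gap: writing $\sum_k T^{n_k}x=T^{n_1}\sum_k T^{n_k-n_1}x$ normalizes the starting index but leaves the gaps $n_{k+1}-n_k$ untouched, and the threshold in each greedy step still depends on the (a priori unbounded) partial sum $S_{k-1}$, so no uniformity over arbitrary index sets is gained.

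What you are missing is precisely the content of the theorem in \cite{L} from which this Proposition is deduced. There one introduces the gauge $F(s)=(s^p+\|x\|^p)^{1/p}$, for which sub-$(m_p)$ gives $\limsup_n\|z+T^nx\|\le F(\|z\|)$ for every $z$, notes that the relevant hypothesis is $F^N(0)/N\to 0$ (which for this $F$ is exactly $p>1$), and then runs a delicate inductive estimate over all finite increasing tuples $(n_1,\dots,n_N)$ simultaneously, using the semigroup inequality $\|T^mz+T^nx\|\le\|z+T^{n-m}x\|$ to transport the asymptotic estimate to arbitrarily packed index sets. That uniformization is the mathematical heart of the result; without it, your argument establishes only a weak Banach--Saks-type statement (norm convergence of averages along some subsequence of the orbit), which is strictly weaker than the Blum--Hanson property. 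So the proposal correctly identifies the obstacle but does not overcome it, and the sketch offered for doing so would need substantial new ideas to be made to work.
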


\begin{Example}(see \cite{N} and \cite{G})\\
We recall the definition of the James space $J_p$. This is the real Banach space of all sequences $x = (x(n))_{n \in \mathbb{N}}$ of real numbers satisfying $\underset{n \to \infty} {\textnormal{lim}} x(n) = 0$, endowed with the norm
\begin{center}$\|x\|_{J_p}=\textnormal{sup}\left\{\left(\displaystyle\sum\limits_{i=1}^{n-1}|x(p_{i+1})-x(p_i)|^p\right)^{\frac{1}{p}} : 1\leq p_1<p_2<\cdots<p_n\right\}.$\end{center}
This is a quasi-reflexive Banach space which is isomorphic to its bidual.\\
Historically, R.C. James has focused exclusively on $J=J_2$ (see \cite{Ja}), and I.S. Edelstein and B.S. Mityagin (cf \cite{E}) are apparently the first to have observed that we could generalize the definition to $p \geq 1$ arbitrary and to have observed the quasi-reflexivity of $J_p$ for any $p> 1$.\\
There exists an equivalent norm $|\cdot|$ on $J_p$ (see \cite{N}, Corollary 2.4, for the proof) such that, for all $x,y\in J_p$ verifying max $\{i \in \mathbb{N}: x(i) \neq 0 \} < \textnormal{min} \{i \in \mathbb{N}: y(i) \neq 0\} $, it holds that
\begin{center} $|x+y|^p \leq |x|^p + |y|^p$. \end{center}
Thus, $\tilde{J_p} := (J_p,|\cdot|)$ has the sub-$(m_p)$ property, and therefore the Blum-Hanson property.
\end{Example}

We now introduce a notion that is essentially dual to sub-$(m_p)$.

\begin{Definition} Let $X$ be a separable Banach space and $q\in (1,\infty)$. We say that $X^*$ has property sup-$(m_q)^*$ if, for any $x^*\in X^*$ and any weak$^*$-null sequence $(x_n^*)$ in $X^*$, we have:
$$\liminf_{n\to \infty}\|x^*+x^*_n\|^q\ge \|x^*\|^q+\liminf_{n\to \infty}\|x^*_n\|^q.$$
\end{Definition}

The following is an easy adaptation of the proof of Proposition 2.6 from \cite{Go}.

\begin{Proposition}\label{duality} Let $X$ be a separable Banach space. Let $p\in (1,\infty)$ and $q$ be its conjugate exponent. Assume that $X^*$ has property sup-$(m_q)^*$, then $X$ has property sub-$(m_p)$.
\end{Proposition}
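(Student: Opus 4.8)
The plan is to dualise the statement by testing $\|x+x_n\|$ against norming functionals and exploiting the sup-$(m_q)^*$ hypothesis on the sequence of these functionals. Fix $x\in X$ and a weakly null sequence $(x_n)$ in $X$, and set $L=\limsup_n\|x+x_n\|$ and $M=\limsup_n\|x_n\|$; the goal is $L\le(\|x\|^p+M^p)^{1/p}$. First I would pass to a subsequence along which $\|x+x_n\|\to L$, and for each $n$ choose a norming functional $x_n^*\in S_{X^*}$ with $\langle x_n^*,x+x_n\rangle=\|x+x_n\|$. Since $X$ is separable, $B_{X^*}$ is weak$^*$-metrisable and weak$^*$-compact, hence weak$^*$-sequentially compact; so after a further subsequence $x_n^*\overset{w^*}{\to}x^*$ for some $x^*\in B_{X^*}$. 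Writing $y_n^*:=x_n^*-x^*$, the sequence $(y_n^*)$ is weak$^*$-null while $\|x_n^*\|=1$ for all $n$.

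The heart of the argument is the expansion
\[
\|x+x_n\|=\langle x_n^*,x+x_n\rangle=\langle x^*,x\rangle+\langle x^*,x_n\rangle+\langle y_n^*,x\rangle+\langle y_n^*,x_n\rangle .
\]
Here $\langle x^*,x_n\rangle\to0$ because $(x_n)$ is weakly null and $x^*$ is fixed, while $\langle y_n^*,x\rangle\to0$ because $(y_n^*)$ is weak$^*$-null and $x$ is fixed; thus the two cross terms vanish in the limit and $L=\langle x^*,x\rangle+\lim_n\langle y_n^*,x_n\rangle$. The only term coupling the two sequences is $\langle y_n^*,x_n\rangle$, and I would control it crudely by $\langle y_n^*,x_n\rangle\le\|y_n^*\|\,\|x_n\|$. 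After extracting once more so that $\|y_n^*\|\to t$ for some $t\ge 0$, and using $\limsup_n\|x_n\|\le M$, this gives $\lim_n\langle y_n^*,x_n\rangle\le tM$, so that with $s:=\|x^*\|$ we obtain $L\le s\|x\|+tM$.

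It remains to bound $s\|x\|+tM$, and this is where the sup-$(m_q)^*$ hypothesis enters: applying it to the fixed functional $x^*$ and the weak$^*$-null sequence $(y_n^*)$ yields
\[
1=\liminf_n\|x^*+y_n^*\|^q=\liminf_n\|x_n^*\|^q\ge\|x^*\|^q+\liminf_n\|y_n^*\|^q=s^q+t^q .
\]
Since $p$ and $q$ are conjugate, Hölder's inequality applied to the two-term vectors $(s,t)$ and $(\|x\|,M)$ gives
\[
s\|x\|+tM\le(s^q+t^q)^{1/q}(\|x\|^p+M^p)^{1/p}\le(\|x\|^p+M^p)^{1/p},
\]
which is exactly the sub-$(m_p)$ inequality. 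The step I expect to require the most care is the treatment of the coupled term $\langle y_n^*,x_n\rangle$: the estimate $\|y_n^*\|\,\|x_n\|$ looks wasteful, yet it is in fact tight enough, precisely because the constraint $s^q+t^q\le 1$ together with the conjugacy of $p$ and $q$ makes the closing Hölder step sharp. The second point to check with some care is that separability is genuinely used, namely to extract the weak$^*$-limit $x^*$ from the bounded sequence $(x_n^*)$.
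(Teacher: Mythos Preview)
Your proof is correct and follows essentially the same route as the paper's: pick norming functionals for $x+x_n$, use separability to extract a weak$^*$-convergent subsequence, split each functional as a fixed part plus a weak$^*$-null part, apply sup-$(m_q)^*$ to get $\|x^*\|^q+t^q\le 1$, and close with H\"older on the two-term vectors. Your write-up is simply more explicit than the paper's about the subsequence extractions and the vanishing of the cross terms.
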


\begin{proof} Let $x\in X$ and $(x_n)$ be a weakly null sequence in $X$ and denote $s=\limsup_n\|x_n\|$.  Pick $y_n^* \in X^*$ so that $\|y^*_n\|=1$ and $y^*_n(x+x_n)=\|x+x_n\|$. After extracting a subsequence, we may assume that $(y^*_n)$ is weak$^*$ converging to $x^* \in B_{X^*}$. Denote $x^*_n= y^*_n-x^*$ and assume also, as we may, that $\lim_n\|x^*_n\|=t$. Since $X^*$ has sup-$(m_q)^*$, we have that $\|x^*\|^q+t^q \le 1$. Therefore
\begin{align*}
\limsup_n\|x+x_n\|&=\limsup_n(x^*+x^*_n)(x+x_n)\le x^*(x)+st \\
&\le (\|x^*\|^q+t^q)^{1/q}(\|x\|^p+s^p)^{1/p}\le (\|x\|^p+s^p)^{1/p}.
\end{align*}
This concludes our proof.

\end{proof}

\section{Main results}

We give some definitions that will be used later.

\begin{Definition}
Given an FDD $(E_n)$, $(x_n)$ is said to be a block sequence with respect to $(H_i)$ if there exists a sequence of integers $0=m_1<m_2<\cdots$ such that $x_n \in \bigoplus_{j=m_n}^{m_{n+1}-1}E_j$.
\end{Definition}

\begin{Definition}
Let $1\leq q \leq p \leq \infty$ and $C<\infty$. A (finite or infinite) FDD $(E_i)$ for a Banach space $Z$ is said to satisfy $C-(p,q)$ estimates if for all $n \in \mathbb{N}$ and block sequences $(x_i)_{i=1}^n$ with respect to $(E_i)$:
\begin{center}$C^{-1}\left(\displaystyle\sum\limits_{1}^{n}\|x_i\|^p\right)^{\frac{1}{p}} \leq \left\|\displaystyle\sum\limits_{1}^{n}x_i\right\| \leq C\left(\displaystyle\sum\limits_{1}^{n}\|x_i\|^q\right)^{\frac{1}{q}}.$\end{center}
\end{Definition}

${}$






For the central theorem for this work, we now recall the definition of the Szlenk index.

\begin{Definition}
Let $X$ be a Banach space and $K$ be a weak$^*$-compact subset of $X^*$. For $\epsilon>0$, let $\mathcal{V}$ be the set of all weak$^*$-open subsets of $K$ such that the norm diameter (for the norm of $X^*$) of $V$ is less than $\epsilon$, and \begin{center}$s_{\epsilon}K = K\backslash \bigcup\{V : V \in \mathcal{V}\}.$\end{center}
As a remark, $s_{\epsilon}^{\alpha}B_{X^*}$ is defined inductively for any ordinal $\alpha$ by
\begin{center}$s_{\epsilon}^{\alpha+1}B_{X^*}=s_{\epsilon}(s_{\epsilon}^{\alpha}B_{X^*})$\end{center}
and \begin{center}$s_{\epsilon}^{\alpha}B_{X^*}=\underset{\beta<\alpha}{\bigcap}s_{\epsilon}^{\beta}B_{X^*}\textnormal{ if } \alpha \textnormal{ is a limit ordinal}.$\end{center}
We define $Sz(X,\epsilon)$ to be the least ordinal $\alpha$ so that $s_{\epsilon}^{\alpha}B_{X^*}=\emptyset$ if such an ordinal exists. Otherwise we write $Sz(X,\epsilon)=\infty$ by convention.\\
We will then denote $Sz(X)$ the Szlenk index of $X$, defined by \begin{center}$Sz(X)=\underset{\epsilon>0}{\textnormal{sup}}~Sz(X,\epsilon).$\end{center}
\end{Definition}

\begin{Remark}
For a detailed report about the Szlenk index, one can refer to \cite{L2}.\\
Note that the Szlenk index was introduced by W. Szlenk in \cite{S} to show that there is no universal reflexive space for the class of separable reflexive spaces.
\end{Remark}

\noindent The main ingredient of our argument is the following result, which is deduced  from \cite{Kn} (Corollary 5.3) and is already cited in \cite{L1} (in the proof of Theorem $4.15$). However, in \cite{L1}, we do not find the detailed proof of this property, that we include now.

\begin{Proposition}
Let $Y$ be a separable Banach space such that Sz$(Y) \leq \omega$, where $\omega$ denote the first infinite ordinal.\\
Then, $Y$ can be renormed in such a way that there exists $q\in (1,\infty)$ so that
\begin{center}$\textnormal{lim sup}\|y+y_n\|^q \leq \|y\|^q + \textnormal{lim sup}\|y_n\|^q$,\end{center}
whenever $y\in Y$ and $(y_n)$ is a weakly null sequence in $Y$.
\end{Proposition}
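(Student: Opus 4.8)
The plan is to convert the smallness of the Szlenk index into a concrete coordinate structure and then renorm. By \cite{Kn} (Corollary 5.3), the hypothesis that $Y$ is separable with $\operatorname{Sz}(Y)\le\omega$ produces an exponent $q\in(1,\infty)$, a Banach space $Z$ with a shrinking FDD $(E_n)$, and an isomorphic embedding $Y\hookrightarrow Z$, such that $(E_n)$ satisfies upper $\ell_q$ block estimates: for some $C<\infty$ and every block sequence $(x_i)_{i=1}^n$ with respect to $(E_n)$,
\[
\Big\|\sum_{i=1}^n x_i\Big\|\le C\Big(\sum_{i=1}^n\|x_i\|^q\Big)^{1/q}.
\]
I would work entirely in $Z$, construct there an equivalent norm with the required asymptotic inequality, and then restrict it to $Y$: since the inclusion is bounded and linear it is weak-to-weak continuous, so a weakly null sequence in $Y$ stays weakly null in $Z$, and an inequality valid for all weakly null sequences in $Z$ is automatically inherited by the subspace $Y$.

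The crux is to upgrade the upper estimate, which carries the constant $C$, to the sharp inequality with constant $1$ demanded by property sub-$(m_q)$. The guiding idea is an $\ell_q$-blocking functional, morally
\[
|z|=\inf\Big\{\big(\textstyle\sum_{j}\|u_j\|^q\big)^{1/q}:\ z=\sum_j u_j,\ \operatorname{supp}u_1<\operatorname{supp}u_2<\cdots\Big\},
\]
the infimum taken over finite decompositions of $z$ into successive blocks. The upper estimate forces $|z|\ge C^{-1}\|z\|$ while the trivial decomposition gives $|z|\le\|z\|$, so such a functional is equivalent to $\|\cdot\|$, and concatenating near-optimal decompositions of successive vectors $z,w$ yields the constant-$1$ relation $|z+w|^q\le|z|^q+|w|^q$. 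From this, for fixed $z$ and any block sequence $(z_n)$ whose supports tend to infinity one gets $\limsup_n|z+z_n|^q\le|z|^q+\limsup_n|z_n|^q$ after approximating $z$ by a finitely supported vector.

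It then remains to replace block sequences by arbitrary weakly null sequences. Given $y\in Y$ and a weakly null $(y_n)\subset Y$, I would view everything in $Z$: each head projection $P_{\le k}$ has finite-dimensional range, so weak nullity gives $\|P_{\le k}y_n\|\to 0$ as $n\to\infty$ for every fixed $k$, while each individual $y_n$ has small FDD-tail. A standard gliding-hump extraction then produces a subsequence and a genuine block sequence $(w_n)$ with supports tending to infinity and $|y_n-w_n|\to 0$. Feeding $(w_n)$ into the block inequality and letting the perturbation errors vanish yields $\limsup_n|y+y_n|^q\le|y|^q+\limsup_n|y_n|^q$ in $(Z,|\cdot|)$, hence in $Y$ for the restricted norm, which is the asserted renorming. (One could equally aim at the dual formulation, producing property sup-$(m_{q'})^*$ on $Y^*$ from a $w^*$-UKK dual renorming and then invoking Proposition \ref{duality}; the difficulties are the same.)

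The main obstacle I anticipate is exactly the passage to constant $1$. The naive infimal-convolution functional above need not be a genuine norm, since for $q>1$ the set $\{z:\ |z|\le 1\}$ is not visibly convex and the FDD projections need not be contractive; making this into a bona fide equivalent norm that still yields the sharp constant in the asymptotic limit---rather than merely $1+\varepsilon$---is where the real work lies and is precisely the content extracted from \cite{Kn}. By contrast, the gliding-hump reduction from weakly null sequences to block sequences and the restriction to $Y$ are routine, given that $(E_n)$ is shrinking.
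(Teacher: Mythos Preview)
Your sketch is sound in outline but the paper takes a different and cleaner route that precisely avoids the obstacle you single out. Instead of embedding $Y$ into a space with a shrinking FDD and upper $\ell_q$ estimates and then fighting to push the constant down to~$1$, the paper works on the dual side: from \cite{Kn} (Corollary~5.3) it extracts a space $Z=X^*$ with a \emph{boundedly complete} FDD satisfying $1$--$(p,1)$ estimates (so the constant is already~$1$), together with a norm--and--weak$^*$ isomorphism of $Y^*$ onto a weak$^*$-closed subspace $F$ of $X^*$. The constant-$1$ lower $\ell_p$ block estimate on a boundedly complete FDD yields sup-$(m_p)^*$ for $X^*$ immediately, this passes to the weak$^*$-closed subspace $F\simeq Y^*$, and Proposition~\ref{duality} then gives sub-$(m_q)$ on $Y\simeq X/G$ (here $G^\perp=F$), which is the desired renorming.

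The trade-off is exactly the one you foresaw but misjudged in your parenthetical remark: the difficulties are \emph{not} the same on the two sides. A constant-$1$ \emph{lower} $\ell_p$ estimate is produced by a supremum-type norm and is automatically a genuine norm; a constant-$1$ \emph{upper} $\ell_q$ estimate would come from an infimum like the one you wrote, and, as you note, convexity of its unit ball is unclear for $q>1$ (refining a block decomposition moves the $\ell_q$-sum in the wrong direction, so the common-refinement trick fails). The paper's dual detour via sup-$(m_p)^*$ and Proposition~\ref{duality} is thus not merely an alternative packaging but the device that makes the constant-$1$ passage free. Your primal argument can be completed, but only by importing essentially this same dual construction from \cite{Kn}.
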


\begin{proof}
According to Corollary 5.3  from  \cite{Kn}, Sz$(Y) \leq \omega$ implies that there exists a Banach space $Z$ with a boundedly complete FDD $(E_i)$ (in particular $Z$ is isometric to a dual space $X^*$) with the following properties.
\begin{enumerate}
    \item There exists $p\in (1,\infty)$ such that $(E_i)$ satisfies $1-(p,1)$ estimates.
    \item $Y^*$ is isomorphic (norm and weak$^*$) to a weak$^*$-closed subspace $F$ of $Z=X^*$.
\end{enumerate}
Let us denote $S:Y^*\to F$ this isomorphism. Then, there exists a subspace $G$ of $X$ such that $G^\perp=F$ and $S$ is the adjoint of an isomorphism $T$ from $X/G$ onto $Y$. Let now $q$ be the conjugate exponent of $p$. It is thus enough to prove that $E=X/G$ has sub-$(m_q)$. Since $X^*=Z$ satisfies $1-(p,1)$ estimates with respect to the boundedly complete FDD $(E_i)$, it is immediate that $X^*$ has sup-$(m_p)^*$. Now this property passes clearly to its weak$^*$-closed subspace $F$ and $E^*$ has sup-$(m_p)^*$. Finally, we deduce from Proposition \ref{duality} that $E$ has sub$-(m_q)$. This finishes our proof.
\end{proof}

\noindent Thanks to this Proposition, we obtain the Theorem \ref{C2}.

\begin{proof}
The proof is immediate by applying the Proposition \ref{C1}.
\end{proof}





${}$

We will now talk about property $(M^*)$.\\
\noindent It was studied by N.J. Kalton and D. Werner in \cite{K} :

\begin{Definition}
A Banach space $X$ has property $(M^*)$ if, for $u^*,v^*\in S_{X^*}$ and $(x_n^*)\subseteq X^*$ a weak$^*$ null sequence, it holds that
\begin{center} $\underset{n}{\textnormal{lim~sup}}\|u^*+x_n^*\|=\underset{n}{\textnormal{lim~sup}}\|v^*+x_n^*\|$.\end{center}
\end{Definition}

\begin{Remark}
It has been shown in \cite{K} that, if $X$ is a separable Banach space having property $(M^*)$, then its dual is separable.
\end{Remark}

\noindent The following Proposition follows from \cite{D} (Proposition 2.2 of this article).

\begin{Proposition}\label{P3}
Let $X$ be a separable Banach space with property $(M^*)$. Then $X$ is asymptotically uniformly smooth for a norm $\|\cdot\|_M$.
\end{Proposition}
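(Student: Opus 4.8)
The plan is to dualise. Since the modulus $\overline{\rho}_X$ of asymptotic uniform smoothness of a norm on $X$ is conjugate (in the sense of Fenchel--Legendre) to the modulus of weak$^*$ asymptotic uniform convexity of the corresponding dual norm on $X^*$, it is enough to produce an equivalent \emph{dual} norm on $X^*$ whose weak$^*$ asymptotic uniform convexity modulus is strictly positive for every $t>0$; taking $\|\cdot\|_M$ to be its predual norm on $X$ then gives an AUS norm on $X$. By the Remark following the definition of $(M^*)$, separability of $X$ together with property $(M^*)$ forces $X^*$ to be separable, so $B_{X^*}$ is weak$^*$ metrizable and all the relevant asymptotic moduli can be computed along sequences rather than along finite-codimensional subspaces.

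First I would extract from $(M^*)$ the relevant \emph{direction-free} asymptotic function. For a fixed weak$^*$-null sequence $(x_n^*)$ and any $u^*\in S_{X^*}$, property $(M^*)$ (applied after rescaling) shows that $\limsup_n\|a\,u^*+x_n^*\|$ does not depend on the chosen unit vector $u^*$, only on $a$ and on the sequence. Hence, setting
\[
N(a,b)=\sup\Big\{\limsup_n\|a\,u^*+x_n^*\| \;:\; u^*\in S_{X^*},\ (x_n^*)\ \text{weak}^*\text{-null},\ \limsup_n\|x_n^*\|\le b\Big\},
\]
one checks that the supremum over $u^*$ is redundant, so $N$ is a genuine function of $(a,b)$ alone; it is positively homogeneous and subadditive, satisfies $N(1,0)=N(0,1)=1$, and it records the asymptotic behaviour of the dual norm \emph{uniformly over the whole sphere} $S_{X^*}$. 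This uniformity is exactly what $(M^*)$ buys, and what a general separable Asplund space lacks.

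The heart of the argument, and the step I expect to be the main obstacle, is to convert $N$ into an equivalent dual norm on $X^*$ carrying a strictly positive modulus of weak$^*$ asymptotic uniform convexity. The difficulty is twofold: the raw gauge $N$ coming from $(M^*)$ may be ``flat'' (for instance of $\max$-type, as for $c_0$), so a positive convexity modulus has to be \emph{created} by regularising $N$ — replacing it by a strictly convex majorant of the appropriate power type at the origin — and one must simultaneously keep the resulting norm weak$^*$ lower semicontinuous, so that it really is the dual of an equivalent norm $\|\cdot\|_M$ on $X$. This is precisely the content of Proposition~2.2 of \cite{D} (in the spirit of the structure theory for property $(M)$ in \cite{K}): the uniform asymptotic structure furnished by $(M^*)$ is rigid enough that such a convexification can be carried out at every point of $S_{X^*}$ at once. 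Once the renormed dual is weak$^*$ asymptotically uniformly convex, the conjugacy of the two moduli yields $\lim_{t\to 0}\overline{\rho}_X(t)/t=0$ for $\|\cdot\|_M$, i.e. $\|\cdot\|_M$ is AUS, which finishes the proof.
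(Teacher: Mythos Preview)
The paper offers no argument of its own here; it simply records that the statement is Proposition~2.2 of \cite{D}. Since your proposal ultimately defers the substantive step to that same reference, you are in line with the paper's treatment.

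Your surrounding sketch --- dualise via the conjugacy of the asymptotic moduli and exploit the direction-independence that $(M^*)$ forces on $X^*$ --- is on the right track, but one illustration is miscast. The gauge $N$ you introduce lives on $X^*$; for $X=c_0$ one has $X^*=\ell_1$, and for any bounded coordinate-wise null sequence $(x_n^*)\subset\ell_1$ a direct computation gives $\limsup_n\|a\,u^*+x_n^*\|_{1}=|a|+\limsup_n\|x_n^*\|_{1}$. Thus $N(a,b)=|a|+b$ for $c_0$, which is of $\ell_1$-type and yields the \emph{strongest} possible weak$^*$ asymptotic uniform convexity, not the flat $\max$-type you describe. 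In fact the flat case $N(1,t)=1$ cannot arise under $(M^*)$ when $X^*$ is separable (it would force a $c_0$-type asymptotic structure inside the separable dual $X^*$), so the ``regularisation to create convexity'' step you anticipate is not the mechanism at work in \cite{D}; rather, the norm satisfying $(M^*)$ is itself the AUS norm $\|\cdot\|_M$.
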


\begin{Corollary}
Let $X$ be a separable Banach space with property $(M^*)$. Then $X$ has an equivalent norm with the Blum-Hanson property.
\end{Corollary}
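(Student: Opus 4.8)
The plan is to combine the two results already in place, Proposition~\ref{P3} and Theorem~\ref{C2}, exploiting that both are statements \emph{up to an equivalent renorming}.

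First I would apply Proposition~\ref{P3}: since $X$ is a separable Banach space with property $(M^*)$, it admits an equivalent norm $\|\cdot\|_M$ for which $(X,\|\cdot\|_M)$ is asymptotically uniformly smooth. The only point worth recording is that separability is a linear-topological invariant, hence preserved under equivalent renorming, so $(X,\|\cdot\|_M)$ is again a \emph{separable} Banach space whose norm is AUS, i.e. it meets the hypotheses of Theorem~\ref{C2}.

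Next I would feed $(X,\|\cdot\|_M)$ into Theorem~\ref{C2}. That theorem furnishes a norm $\tnorm{\cdot}$ on $X$, equivalent to $\|\cdot\|_M$, which has the Blum-Hanson property. Since equivalence of norms is transitive, $\tnorm{\cdot}$ is equivalent to the original norm of $X$ as well, and this is precisely the asserted conclusion.

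I do not expect any genuine obstacle here: all the analytic content has been packaged upstream, namely the construction of the AUS renorming out of property $(M^*)$ in Proposition~\ref{P3} and the passage from AUS to the Blum-Hanson property in Theorem~\ref{C2}. The corollary is just their composition, and the only routine verifications are that separability survives renorming and that a norm equivalent to an equivalent norm is again equivalent to the original one.
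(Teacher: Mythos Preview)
Your proposal is correct and follows exactly the same route as the paper's proof, which simply states that the corollary follows from Proposition~\ref{P3} and Theorem~\ref{C2}. The extra care you take about separability surviving renorming and transitivity of norm equivalence is entirely routine and consistent with the paper's one-line argument.
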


\begin{proof}
It follows from the Proposition \ref{P3} and from the Theorem \ref{C2}.
\end{proof}




\noindent \textbf{Acknowledgment:} The author is grateful to Gilles Lancien and Audrey Fovelle for helpful discussions.

\end{document}